\newtheorem{theorem}{Theorem}[section]
\newtheorem{lemma}[theorem]{Lemma}
\newtheorem{proposition}{Proposition}
\newtheorem{problem}{Problem}
\theoremstyle{definition}
\newtheorem{definition}[theorem]{Definition}
\newtheorem{remark}{Remark}
\newtheorem{assumption}{Assumption}
\title[Optimal Platoon Coordination] %Use the shortened version of the full title
      {A Constrained Optimal Control Framework for Vehicle Platoons with Delayed Communication}
\author[A M Ishtiaque Mahbub and Behdad Chalaki and Andreas A. Malikopoulos]{}
\subjclass{Primary: 58F15, 58F17; Secondary: 53C35.}
 \keywords{Optimal control, Platoon coordination, Intelligent transportation systems.}
 \email{mahbub@udel.edu}
 \email{bchalaki@udel.edu}
 \email{andreas@udel.edu}
\thanks{The authors are supported by ARPAE grant DE-AR0000796}
\thanks{$^*$ Corresponding author: A M Ishtiaque Mahbub}
\begin{document}
\emergencystretch 3em % to make sure we are not going over the margin

\maketitle

% Enter the first author's name and address:
\centerline{\scshape A M Ishtiaque Mahbub$^*$, Behdad Chalaki,}
\medskip
{\footnotesize
% please put the address of the first author
 %\centerline{University of Delaware}
   %\centerline{Other lines}
   %\centerline{130 Academy Street, Newark, DE-19716, USA}
} % Do not forget to end the {\footnotesize by the sign }

\medskip

\centerline{\scshape and Andreas A. Malikopoulos}
\medskip
{\footnotesize
 % please put the address of the second  and third author
 \centerline{ University of Delaware}
   %\centerline{Other lines}
   \centerline{130 Academy Street, Newark, DE-19716, USA}
}

\bigskip

% The name of the associate editor will be entered by an editorial staff
% "Communicated by the associate editor name" is not needed for special issue.
% \centerline{(Communicated by the associate editor name)}

%The abstract of your paper
\begin{abstract}
Vehicle platooning using connected and automated vehicles (CAVs) has attracted considerable attention. In this paper, we address the problem of optimal coordination of CAV platoons at a highway on-ramp merging. We present a single-level constrained optimal control framework that optimizes fuel economy and travel time of the platoons while satisfying the state, control, and safety constraints. We also explore the effect of delayed communication among the CAV platoons and propose a robust coordination framework to enforce lateral and rear-end collision avoidance constraints in the presence of bounded delays. We provide a closed-form analytical solution of the optimal control problem with safety guarantees that can be implemented in real time. Finally, we validate the effectiveness of the proposed control framework using a high-fidelity commercial simulation environment.

%This is the abstract of your paper and it should not exceed \textbf{200} words.
\end{abstract}

%%%%%%%%%%%%%%%%%%%%%%%%%%%%%%%%%%%%%%%%%%%%%%%%%%%%%%%%%%%%%%%%%%%%%%%%%%%%%%%%

\section{Introduction} \label{sec:1}

\subsection{Motivation} 
Traffic congestion has increased significantly over the last decade \cite{Schrank2019}.
Bottlenecks such as urban intersections, merging roadways, highway on-ramps, roundabouts, and speed reduction zones along with the driver responses \cite{singh2018critical,Knoop2009} to various disturbances in the transportation network are the primary sources of traffic congestion \cite{Margiotta2011}. 
Emerging mobility systems, e.g., connected and automated vehicles (CAVs), lay the foundation to improve safety and transportation efficiency at these bottlenecks by providing the users the opportunity to better monitor the transportation network conditions and make optimal decisions \cite{Spieser2014, Fagnant2014, wadud2016}. Having enhanced computational capabilities, CAVs can establish real-time communication with other vehicles and infrastructure to increase the capacity of critical traffic corridors, decrease travel time, and improve fuel efficiency and safety \cite{Beaver2020DemonstrationCity,chalaki2021CSM,Mahbub2019ACC, mahbub2020sae-2,zhang2019decentralized}.
However, the cyber-physical nature of emerging mobility systems, e.g., data and shared information through vehicle-to-vehicle (V2V) and vehicle-to-infrastructure (V2I) communication, is associated with significant technical challenges and gives rise to a new level of complexity \cite{Malikopoulos2016b} in modeling and control \cite{Ferrara2018}. 

There have been two major approaches to utilizing the connectivity and automation of vehicles, namely, coordination and platooning. The concept of coordination through different traffic bottlenecks is enabled by the vehicle-to-everything communication protocol among the CAVs and the surrounding infrastructure. On the other hand, real-time computation and automation of CAVs enables safe and comfortable trajectories with extremely short headway in the form of CAV platoon, which consists of a string \cite{Kalle2017} of consecutive CAVs traveling together at a constant headway and speed.

In this paper, we employ the concepts of CAV coordination and platooning to address the problem of minimizing traffic congestion at the traffic bottlenecks in an energy-efficient manner. In particular, we aim at optimally coordinating platoons of CAVs at highway on-ramp merging \cite{Papageorgiou2002} in the presence of bounded delays among platoon leaders while guaranteeing state, control, and safety constraints.
%%%%%%%%%%%%%%%%%%%%%%%%%%%%%%%%%%%%%%%%%%%%%%%%%%%%%%%%%%%%%%
%literature review
\subsection{Literature Review}
%
%literature: vehicle smoothing
CAV coordination is an approach that has been explored to mitigate the speed variation of individual CAVs throughout the transportation network \cite{Ntousakis2016aa}. Early efforts \cite{Levine1966, Athans1969} considered a single string of vehicles that was coordinated through a merging roadway by employing a linear optimal regulator. In $1993$, Varaiya \cite{Varaiya1993} outlined the key features of an automated intelligent vehicle/highway system, and proposed a basic control system architecture. In $2004$, Dresner and Stone \cite{Dresner2004} proposed the use of the reservation scheme to control a signal-free intersection of two roads. Since then, several research efforts \cite{Dresner2008, DeLaFortelle2010, Huang2012, Au2010a,Alonso2011} have extended this approach for coordination of CAVs at urban intersections. 
%\todobehdad{@ishti: most of non-self citations in here are a bit old, should we add more recent citations?}
%\todoIshti{@Behdad: definitely! please feel free to add any. I will also do the same on the final revision.}
%cite our lab's papers
More recently, a decentralized optimal control framework was presented in \cite{Malikopoulos2017, mahbub2020decentralized} for coordinating online CAVs at different traffic scenarios such as on-ramp merging roadways, roundabouts, speed reduction zones and signal-free intersections. The framework uses a hierarchical structure consisting of an upper-level vehicle coordination problem to minimize travel time and a low-level energy minimization problem. The state and control constraints in the coordination problem has been addressed in \cite{Malikopoulos2017, malikopoulos2019ACC,Mahbub2020ACC-1, mahbub2020Automatica-2,chalaki2020TITS,chalaki2020TCST} by incorporating the constraints in the low-level optimization problem, and in \cite{Malikopoulos2019CDC, Malikopoulos2020} by incorporating the constraints in the upper-level optimization problem.
Detailed discussions of the research reported in the literature to date on coordination of CAVs can be found in \cite{Malikopoulos2016a} and \cite{guanetti2018control}.

The aforementioned coordination strategies are vehicle-centric approaches focusing on control of individual CAVs within the network, whereas platooning can leverage the full potential of CAVs to enhance current optimal coordination of CAVs. 
%Literature:Platoon coordination
The concept of platoon formation gained momentum %is a popular system-level approach to address traffic congestion, which gained momentum 
in the 1980s and 1990s as a system-level approach to address traffic congestion \cite{Shladover1991,varaiya1993smart,Rajamani2000} and has been shown to have significant benefits \cite{Kalle2015, Kalle2015a}. Shladover \textit{et al.} \cite{Shladover1991} presented the concept of operating automated vehicles in closely spaced platoons as part of an automated highway system, and pioneered the California Partners for Advanced Transportation Technology (PATH) program to conduct heavy-duty truck platooning from $2001$ to $2003$. Rajamani \textit{et al.} \cite{Rajamani2000} discussed the lateral and longitudinal control of CAVs for the automated platoon formation. From a system point of view, platooning of vehicles yields additional mobility benefits.
It has been shown that capacity at a traffic bottleneck, such as an intersection, can be doubled or even tripled by platooning of vehicles \cite{lioris2017platoons}. Moreover, platooning improves the fuel efficiency of the vehicles due to the reduction of the aerodynamic drag within the platoon, especially at high cruising speeds\cite{alam2010experimental, shida2010short, tsugawa2013,Beaver2021Constraint-DrivenStudy}. Various research efforts in the literature have addressed vehicle platooning at highways to increase fuel efficiency, traffic flow, driver comfort, and safety. To date, there has been a rich body of research focusing on exploring several methods of forming and/or utilizing platoons to improve transportation efficiency \cite{van2017fuel,wang2017developing,johansson2018multi,karbalaieali2019dynamic,yao2019managing,xiong2019analysis,pourmohammad2020platform,ard2020optimizing,bhoopalam2018planning,mahbub2021_platoonMixed}. A detailed discussion on different approaches in vehicle platooning systems at highways can be found in \cite{bergenhem2012platoonoverview, zhang2020truckplatoon, kavathekar2011platoonsurvey}.

%contribution
\subsection{Objectives and Contributions of the Paper}
In this paper, we address the problem of coordinating CAV platoons at a highway on-ramp merging. The main objective is to leverage the key concepts of CAV coordination and platooning, and establish a control framework for platoon coordination aimed at improving network performance while guaranteeing safety.

The key contributions of this paper are
(i) the development of a mathematically rigorous optimal control framework for platoon coordination that completely eliminates stop-and-go driving behavior, and improves fuel economy and traffic throughput of the network,
(ii) the derivation and implementation of the optimal control input in real time that satisfies the state, control, and safety constraints subject to bounded delayed communication, and (iii) the validation of the proposed control framework using a commercial traffic simulator by evaluating its performance compared to a baseline scenario.

%\todobehdad{@Ishti: As I was reading this section, it seemed that we are repeating ourself, so I commented out the following and revised this part slightly. Feel free to revert it if you are not a fan :D}
%\todoIshti{@Behdad: I agree with you. the objective and contribution section  was kind of repeating when I was writing this section. I will probably revise the first line a bit to talk about the general "objective" of the paper without any repeation.}
\begin{comment}
This paper advances the state of the art with the following contributions: 
\begin{enumerate}
    \item We present a single-level optimal control framework that optimizes the fuel economy and travel time of the platoons while satisfying the state, control, and safety constraints. 
    \item We develop a robust coordination framework considering the effect of bounded delayed communication to enforce lateral and rear-end collision avoidance constraints. 
    \item We derive a closed-form analytical solution of the optimal control problem using standard Hamiltonian analysis that can be implemented in real time using leader-follower unidirectional communication topology. 
\end{enumerate}

In short, we seek to establish a rigorous control framework that yields a closed-form analytical solution for the formulation considering system constraints and bounded delayed communication, and thus it is appropriate for real-time implementation on-board the CAVs  \cite{mahbub2020sae-1}
To the best of our knowledge, such approach has not yet been reported in the literature to date.
\end{comment}
%comparison with literature
\subsection{Comparison With Related Work}
To the best of our knowledge, this paper is the first attempt to establish a rigorous constrained optimal control framework for coordination of vehicular platoons at a highway on-ramp merging in the presence of bounded inter-platoon delays. This paper advances the state of the art as follows. First, in contrast to other efforts that neglected state/control constraints \cite{Kumaravel:2021uk,Kumaravel:2021wi, stankovic1997suboptimal}, our framework guarantees satisfaction of all of the state, control, and safety constraints in the system. Second, our framework unlike the several efforts in the literature at highway on-ramp merging scenario \cite{Rios-Torres2,pei2019cooperative,xiao2021decentralized,xuFuguo2021decentralized} does not impose a strict first-in-first-out queuing policy to ensure lateral safety. Third, in this paper, we consider the bounded delay in the inter-platoon communication, which most of the studies in the coordination of vehicular platoons neglect \cite{Kumaravel:2021uk,hoef2019truckPlatoon, chang2020mixedPlatoon}. Finally, our framework yields a closed-form analytical solution while satisfying all of the system constraints, and thus it is appropriate for real-time implementation on-board the CAVs  \cite{mahbub2020sae-1}.
%\todobehdad{@Ishti: feel free to revise the above subsection, also if you could add more references for the last two arguments, that will be perfect.}
%\todoIshti{@behdad: this looks perfect. I will add more references where needed.}
%consider delay

%Optimal coordination of CAV platoons has been addressed in \cite{Kumaravel:2021uk} without considering state, control and safety constraints, where a hierarchical control framework has been adopted. In contrast, we propose a computationally efficient single-level optimization framework capable of satisfying all of the system and safety constraints.

%hierarchical vs. single-level

%structure of the paper
\subsection{Organization of the paper}
The remainder of the paper is organized as follows. In Section \ref{sec:problem_formulation}, we present the modeling framework and formulate the problem. In Section \ref{sec:optimal_control}, we provide a detailed exposition of the optimal control framework and the algorithm to implement the closed-form analytical solution to the constrained optimal control problem. In Section \ref{sec:simul}, we evaluate the effectiveness of the proposed approach in a simulation environment. Finally, we draw conclusions and discuss the next steps in Section \ref{sec:conclusion}. %Additional proofs of selected Lemmas and Theorems can be found in Section VII as an appendix.

%%%%%%%%%%%%%%%%%%%%%%%%%%%%%%%%%%%%
%%%%%%%%%%%%%%%%%%%%%%%%%%%%%%%%%%%%
%SECTION II: Problem Formulation
%%%%%%%%%%%%%%%%%%%%%%%%%%%%%%%%%%%%
%%%%%%%%%%%%%%%%%%%%%%%%%%%%%%%%%%%%
%%%%%%%%%%%%%%%%%%%%%%%%%%%%%%%%%%%%%%%%%%%%%%%%%%%%%%%%%%%%%%%%%%%%
\section{Modeling Framework} \label{sec:problem_formulation}
\begin{figure}[t]
    \centering
\includegraphics[width=0.95\linewidth]{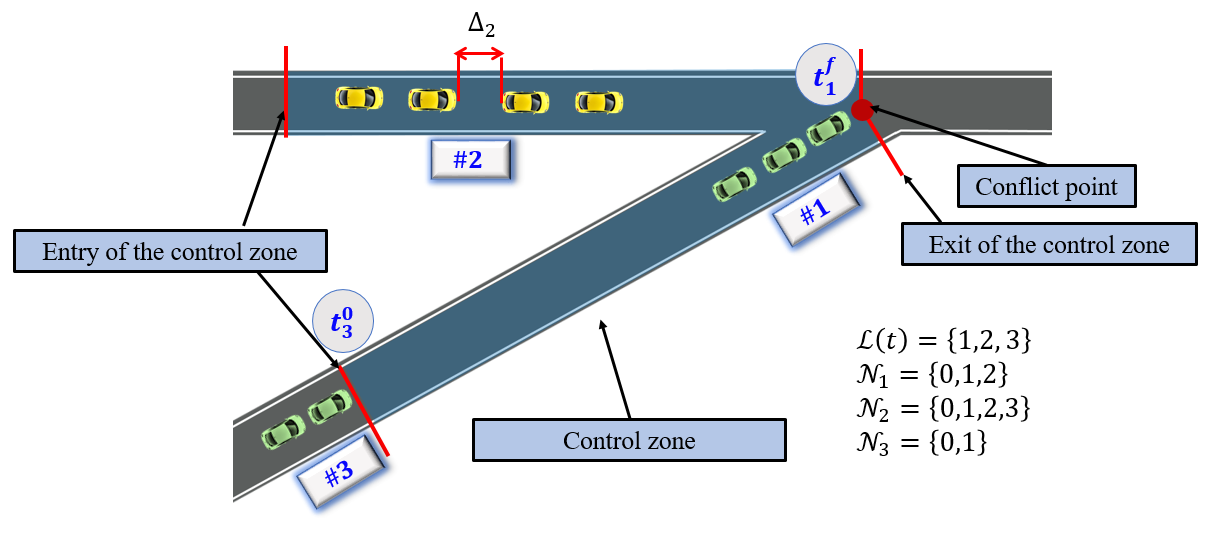}  \caption{On-ramp merging with a single merging point for platoons of CAVs. The control zone is highlighted in light blue color, the entry time and exit time to the control zone are depicted with circles, and example sets of platoon leaders and followers are shown.}
    \label{fig:MG}
\end{figure}
We consider the problem of coordinating platoons of CAVs in a scenario of highway on-ramp merging (Fig. \ref{fig:MG}). Although our analysis can be applied to any traffic scenario, e.g., signal-free intersections, roundabouts, and speed reduction zones, we use a highway on-ramp as a reference to present the fundamental ideas and results of this paper.
%e.g., merging at signal-free urban intersection \cite{Ntousakis:2016aa} and roundabouts, passing through speed reduction zones \cite{Malikopoulos2018c},

The on-ramp merging includes a \textit{control zone}, inside of which platoons of CAVs communicate with the \textit{coordinator}. The coordinator does not make any decision for the CAVs, and only acts as a database for the CAVs. The paths of the main road and the ramp road intersect at a point called \textit{conflict point}, indexed by $n\in\mathbb{N}$, at which lateral collision may occur.
We consider that CAVs have formed platoon upstream of the control zone in a region called \textit{platooning zone}. We refer interested readers to  \cite{mahbub2021_platoonMixed,mahbub2022ACC, tuchner2015vehicle} for further details on platoon formation.

%\todobehdad{I need to revise the figure to match the notation}

%we denote by $p_{i}^n$ and $p_{j}^n$ the distance of the conflict point $n$ from CAV $i$'s and $j$'s paths' entries to the control zone, respectively.  

\subsection{Network Topology and Communication}
%communication topology
\begin{figure}[t]
    \centering
\includegraphics[width=0.95\linewidth]{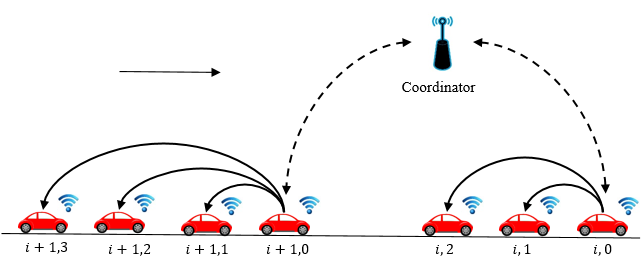}  \caption{Network topology for information flow: (i) bidirectional inter-platoon communication (dashed double-headed arrow) between the platoon leaders via the coordinator, and (ii) unidirectional intra-platoon communication (solid single-headed arrow) from platoon leader to the platoon followers.}
    \label{fig:communication}
\end{figure}

In our modeling framework, we impose the following communication topology based on the standard V2V and V2I communication protocol as shown in Fig. \ref{fig:communication}.
\begin{enumerate}
    \item \textbf{Bidirectional inter-platoon communication:} The leaders of each platoon can exchange information with each other via the coordinator through a V2I communication protocol. The flow of information is bidirectional.
    \item \textbf{Unidirectional intra-platoon communication:} The following CAVs of each platoon can subscribe to the platoon leader's state and control information. The flow of information is unidirectional from the platoon leader to the following CAVs within that platoon.
\end{enumerate}
When a platoon leader enters the control zone, it subscribes to the bidirectional inter-platoon communication protocol to connect with the coordinator and access the information of platoons which are already in the control zone. After obtaining this information, the leader derives its optimal control input (acceleration/deceleration) to cross the control zone without any lateral or rear-end collision with the other CAVs, and without violating any of the state and control constraints. The leader then communicates its derived control input and trajectory information to its followers using the unidirectional intra-platoon communication protocol so that the following CAVs can compute their control input. Finally, the platoon leader transmits its information to the coordinator so that the subsequent platoon leaders can plan their trajectories accordingly.
In this paper, we enhance our framework to consider delayed transmission during the inter-platoon communication protocol due to the physical distance among the platoons. On the other hand, since the CAVs within each platoon are closely spaced, we consider that there is an instantaneous flow of information within the intra-platoon communication protocol. In our modeling framework, we make the following assumption regarding the nature of delay during the inter-platoon communication protocol.

\begin{assumption}\label{assumption: delay}
The communication delay during the bidirectional inter-platoon communication between each platoon leader and the coordinator is bounded and known a priori.
\end{assumption}
Assumption \ref{assumption: delay} enables the determination of upper bounds on the state uncertainties as a result of sensing or communication errors and delays, and incorporate these into more conservative safety constraints, the exposition of which we provide in Section \ref{subsec:delay}.

\subsection{Dynamics and Constraints}
%set definition
Next, we provide some definitions that are necessary in our exposition.
\begin{definition}
The queue that designates the order in which each platoon leader entered the control zone is given by $\mathcal{L}(t)=\{1,\ldots,L(t)\}$, where ${L}(t)\in\mathbb{N}$ is the total number of platoons that are inside the control zone at time $t\in\mathbb{R}_{\geq0}$. When a platoon exits the control zone, its index is removed from $\mathcal{L}(t)$.
\end{definition}

\begin{definition}
CAVs within platoon $i\in\mathcal{L}(t)$ are indexed with set $\mathcal{N}_i=\{0,1,\dots,m_i\}$, where $0$ and $m_i\in\mathbb{N}$ denote the leader and last CAV of the platoon $i$, respectively. The size of each platoon $i\in\mathcal{L}(t)$ is thus the cardinality of set $\mathcal{N}_i$, and denoted by $M_i:=m_i+1$.
\end{definition}

%Let $L(t)\in\mathbb{N}$ be the total number of platoons inside the control zone at time $t\in\mathbb{R}_{\geq0}$, and $\mathcal{L}(t)=\{1,\ldots,L(t)\}$ be the queue that designates the order in which each platoon leader entered the control zone. 
%In what follows, we use a platoon leader index to denote a particular platoon.
%Let $\mathcal{N}_i=\{0,1,\dots,m\}$ index all CAVs inside the platoon $i\in\mathcal{L}(t)$, where $0$ and $m$ denote the leader and last CAV of the platoon $i$. 
%Additionally, Let $\mathcal{N}(t)=\{1,\ldots, N(t)\}$, where $N(t)\in\mathbb{N}$ is the number of CAVs inside the control zone at time $t\in\mathbb{R}^{+}$, be the queue of CAVs inside the control zone. 

In our analysis, we consider that the dynamics of each CAV $j\in\mathcal{N}_i$ in platoon $i\in\mathcal{L}(t)$ is governed by a double integrator,
\begin{align}\label{eq:model2}
\dot{p}_{i,j}(t) &  =v_{i,j}(t), \nonumber\\
\dot{v}_{i,j}(t) &  =u_{i,j}(t), 
\end{align}
where $p_{i,j}(t)\in\mathcal{P}$, $v_{i,j}(t)\in\mathcal{V}$, and
$u_{i,j}(t)\in\mathcal{U}$ denote position, speed, and control input at $t\in\mathbb{R}_{\geq}0$, respectively. The sets $\mathcal{P}, \mathcal{V},$ and $\mathcal{U}$, are compact subsets of $\mathbb{R}$. 
\begin{remark}\label{rem:notation}
In what follows, to simplify notation, we use subscript $i$ instead of ${i,0}$ to denote the leader of platoon $i\in\mathcal{L}(t)$.
\end{remark}
%Let $\mathbf{x}_{i,j}(t)=[p_{i,j}(t)~ v_{i,j}(t)]^\top$ be the state of the $j^{\text{th}}$ CAV at platoon $i$ at time $t$, 
Let $t_{i,0}^0=t_{i}^{0}\in\mathbb{R}_{\geq 0}$ be the time that leader of platoon $i\in\mathcal{N}(t)$ enters the control zone, and $t_{i,0}^{f}=t_{i}^{f}>t_i^0\in\mathbb{R}_{\geq 0}$ be the time that leader of platoon $i$ exits the control zone. Since each CAV $j\in\mathcal{N}_i,~ i\in\mathcal{L}(t)$, has already formed a platoon in the platooning zone, when the leader enters the control zone at time $t_i^0$, we have $v_{i,j-1}(t_i^0)-v_{i,j}(t_i^0) = 0$ and $p_{i,j-1}(t_i^0)-p_{i,j}(t_i^0)-l_c = \Delta_i$, where $l_c$ denote the length of each CAV $j$, and $\Delta_i$ is the safe bumper-to-bumper inter-vehicle gap between CAVs $j,j-1\in\mathcal{N}_i$ within each platoon $i\in\mathcal{L}(t)$. This bumper-to-bumper inter-vehicle gap is imposed by the platoon forming control in platooning zone upstream of the control zone. After exiting the control zone at $t_i^f$, the leader of platoon $i$ cruises with constant speed $v_i(t_i^f)$ until the last follower in the platoon exits the control zone. %Outside the control zone at time , the speed of each platoon leader remains time invariant until all the platoon members exit the control zone and equal to the exit speed $v_i(t_i^f)$. 
Afterwards, each platoon member $j\in\mathcal{N}_i, i\in\mathcal{L}(t)$ is controlled by a suitable car-following model \cite{wiedemann1974} which ensures satisfying rear-end safety constraint. 

For each CAV $j\in\mathcal{N}_i$ in platoon $i\in\mathcal{L}(t)$ the control input and speed are bounded by 
\begin{align}
    u_{\min}&\leq u_{i,j}(t)\leq u_{\max}, \label{eq:uconstraint} \\
    0< v_{\min}&\leq v_{i,j}(t)\leq v_{\max} \label{eq:vconstraint},
\end{align}
where $u_{\min},u_{\max}$ are the minimum and maximum control inputs and $v_{\min},v_{\max}$ are the minimum and maximum speed limit, respectively. 

To ensure rear-end safety between platoon $i\in\mathcal{L}(t)$ and preceding platoon $k\in\mathcal{L}(t)$, we have 
\begin{gather}\label{eq:rearendInterPlatoon}
 p_{k,m_k}(t)-p_i(t)\geq \delta_i(t)= \gamma + \varphi\cdot v_i(t),
\end{gather}
where $m_k$ is the last follower in the platoon $k$ physically located in front of platoon $i$ and $\delta_i(t)$ is the safe speed-dependent distance, while $\gamma$ and $\varphi\in\mathbb{R}_{>0}$ are the standstill distance and reaction time, respectively. 

Similarly, to guarantee rear-end safety within CAVs inside each platoon $i\in\mathcal{L}(t)$, we enforce 
\begin{gather}\label{eq:rearendIntraPlatoon}
 p_{i,j-1}(t)-p_{i,j}(t)\geq \Delta_i+l_c,\quad \forall j\in\{1,\dots ,m_i\}. 
\end{gather}

Finally, let $k\in\mathcal{L}(t)$ correspond to another platoon that has already entered the control zone and may have a lateral collision with platoon $i\in\mathcal{L}(t)$ at conflict point $n$. 
%We denote by $p_{i}^n$ and $p_{j}^n$ the distance of the conflict point $n$ from $i$'s and $k$'s paths' entries, respectively (Fig. \ref{fig:MG}). Since we do not enforce a FIFO queuing policy, platoon $i$ can either cross this conflict point before, or after platoon $k$. 
For the first case in which platoon $i$ reaches the conflict point after platoon $k$, we have
\begin{equation} \label{eq:lateralBefore}
    t_{i}^f - t_{k,m_k}^f \geq t_h,
\end{equation}
where $t_h\in\mathbb{R}_{>0}$ is the minimum time headway between any two CAVs entering node $n$ that guarantees safety, $t_{i}^f$ is the time that leader of platoon $i$ exits the control zone (recall that the conflict point $n$ is at the exit of control zone), and $t_{k,m_k}^f$ is time that the last CAV in the platoon $k$ exits the control zone.
Likewise, for the second case in which platoon $i$ reaches the conflict point $n$ before platoon $k$, we have 
\begin{equation} \label{eq:lateralAfter}
    t_{k}^f - t_{i,m_i}^f \geq t_h.
\end{equation}

\begin{remark}\label{rem:t_i,m^f}
Given the time $t_i^f$ that the platoon leader of platoon $i\in\mathcal{L}(t)$ exits the control zone, we compute the time $t_{i,m_i}^f$ that the last platoon member $m_i\in\mathcal{N}_i$ exits the control zone as
\begin{equation}\label{eq:tfLastVehicle} 
    t_{i,m_i}^f = t_i^f + \frac{(M_i-1)(\Delta_i+l_c)}{v_i(t_i^f)}.
\end{equation}
\end{remark}
To guarantee lateral safety between platoon $i$ and platoon $k$ at a conflict point $n$, either \eqref{eq:lateralBefore} or \eqref{eq:lateralAfter} must be satisfied. Therefore, we impose the following lateral safety constraint on platoon $i$,

\begin{align} 
    \min \Bigg\{ & t_h- (t_{i}^f - t_{k,m_k}^f),~  t_h - (t_{k}^f - t_{i,m_i}^f)  \Bigg\} \leq 0 . \label{eq:lateralMinSafety}
\end{align}

%\todobehdad{Define conflict points}, which is also considered as the exit of control zone for both paths.
%\todobehdad{Control zone's end is defined at the conflict point, after cz's vehicle can follow Acc or cacc or any other car-following model to ensure safety}
%\todobehdad{Define the set of all vehicles, all platoon leaders}
%\todobehdad{speed and control constraint }
%Assumptions
With the state, control and safety constraints defined above, we now impose the following assumption:
\begin{assumption} \label{assumption:platoon}
Upon entering the control zone, the initial state of each CAV $j\in\mathcal{N}_i(t),~i\in \mathcal{L}(t)$, is feasible,
that is, none of the speed or safety constraints are violated.
%None of the state constraints are active for each CAV $j\in\mathcal{N}_i(t),~i\in \mathcal{L}(t)$ at time $t_i^0$ when each platoon $i\in \mathcal{L}(t)$ enters the control zone.
\end{assumption}
This is a reasonable assumption since CAVs are automated; therefore, there is no compelling reason for them to violate any of the constraint by the time they enter the control zone.
% BEHDAD: I revised this, since they can activate the constraint, they only need to not violate it. 
%the constraints by the time they enter the control zone.
%Assumption \ref{assumption:platoon} ensures that, for each CAV $j\in\mathcal{N}_i(t),~i\in \mathcal{L}(t)$, the initial state is feasible. 
%%%%%%%%%%%%%%%%%%%%%%%%%%%%%%%%%%%%%%%%%%%%%%%%%%%%%%%%%%%%%%%%%%%%%%%%%%%%%%%
\subsection{Information Structure}
In this section, we formalize the information structure that is communicated between the CAV leaders and the coordinator inside the control zone.
\begin{definition}\label{def:information_set}
Let $\boldsymbol{\phi}_i$ be the vector containing the parameters of the optimal control policy (formally defined in Section \ref{subsec:leader_control}) of the leader of platoon $i\in\mathcal{L}(t_i^0)$. Then, the \emph{platoon information set} $\mathcal{I}_i$ that the leader of platoon $i$ can obtain from the coordinator after entering the control zone at time $t=t_i^0$ is
\begin{equation}
    \mathcal{I}_i = \{{\boldsymbol{\phi}}_{1:L(t_i^0)-1},~ M_{1:L(t_i^0)},~ t_{1:L(t_i^0)}^0,~ t_{1:L(t_i^0)-1}^f \},
\end{equation}
where  ${\boldsymbol{\phi}}_{1:L(t_i^0)}:=[{\boldsymbol{\phi}}_{1},\ldots, {\boldsymbol{\phi}}_{L(t_i^0)-1}]^T$, $M_{1:L(t_i^0)}:=[{M}_{1},\ldots, {M}_{L(t_i^0)}]^T$,

$t_{1:L(t_i^0)}^f:=[{t}_{1}^0,\ldots, {t}_{L(t_i^0)}^0]^T$ and $t_{1:L(t_i^0)-1}^f:=[{t}_{1}^f,\ldots, {t}_{L(t_i^0)-1}^f]^T$.
\end{definition}
\begin{remark}\label{rem:info_structure_1}
The information structure $\mathcal{I}_i$ for each platoon $i\in\mathcal{L}(t_i^0)$ indicates that the control policy, entry time to the control zone $t_j^0$, exit time of the control zone $t_j^f$, and the platoon size $M_j$ of each platoon $j\in\mathcal{L}(t_i^0)\setminus\{i\}$ already existing within the control zone is available to the leader of platoon $i$ through the coordinator. Note that, although the leader of platoon $i$ knows the endogenous information $t_i^0$ and $M_i$, it needs to compute the vector of its own optimal control input parameters $\boldsymbol{\phi}_i$ and the merging time $t_i^f$, which we discuss in section \ref{sec:optimal_control}.
\end{remark}
%\todobehdad{@Ishti: I think we need to revise the information structure above to avoid any confusion. (1) I think upon entrance to the control zone, $t_i^0$, the leader of platoon $i$ needs to access the whole position trajectory of any other leaders $j$ (by whole, I mean $ \forall t\in [t_j^0, t_j^f] $) which entered earlier than $i$ to the control zone (and thus $j<i$) (2) $M_i$ should not be time variant.(3) for our lateral safety constraint \eqref{eq:lateralAfter} we also need the time that the last CAV in the platoon exits the control zone, i.e., $t_{k,m_k}^f$ for all platoon $k$ in the conflicting path. Honestly I am not sure what is the best way to construct this information set. }

%\todoIshti{@Behdad: (1) I included the initial, final time and control input structure of all the platoons, which should resolve this issue (eliminating dependencies on $[t_i^0, t_i^f]$. (2) You are correct. I corrected the typo. (3) I think the given information set can be used to compute the merging time of the last platoon member. $t_{k,m_k}^f=t_j^f+M_j/v_j(t_j^f)$, where $v_j(t_j^f)$ can now be computed from the given information.}
\begin{definition}
The \emph{member information set} $\mathcal{I}_{i,j}(t)$ that each platoon member $j \in \mathcal{N}_i\setminus\{0\}$ belonging to each platoon $i\in\mathcal{L}(t)$ at time $t\in[t_i^0,t_i^f]$ can obtain is
\begin{align}
    \mathcal{I}_{i,j} = \{ p_{i,0}(t), v_{i,0}(t), u_{i,0}(t)\}.
\end{align}
\end{definition}
\begin{remark}\label{rem:info_structure_2}
The unidirectional intra-platoon communication protocol allows each platoon member $j \in \mathcal{N}_i\setminus\{0\}$ belonging to platoon $i\in\mathcal{L}(t)$ to access the state and control input information of its platoon leader in the form of $\mathcal{I}_{i,j}$ at each time $t\in[t_i^0,t_i^f]$. The set $\mathcal{I}_{i,j}$ is subsequently used to derive the optimal control input $u_{i,j}^*(t)$ of each platoon member $j$, which we discuss in detail in Section \ref{subsec:follower_control}.
\end{remark}

%problem statement
\section{Optimal Coordination Framework}\label{sec:optimal_control}
In what follows, we introduce our coordination framework which consists of two optimal control problems. The first problem is to develop an energy-optimal control strategy for the platoon leaders to minimize their travel time while guaranteeing that none of their state, control, and safety constraints becomes active. The second problem is concerned with the optimal control of followers within each platoon in order to maintain the platoon formation while ensuring safety and string stability. 
%With the above modeling framework and information structure at hand, we next introduce the problems that are addressed in the paper. 

\subsection{Optimal Control of Platoon Leaders}\label{subsec:leader_control}
In this section, we extend the single-level optimization framework we developed earlier for coordination of CAVs in \cite{Malikopoulos2020} to establish a framework for coordinating platoons of CAVs. Upon entrance to the control zone, the leader of platoon $i\in\mathcal{L}(t)$ must determine the exit time $t_i^f$ (recall that based on Remark \ref{rem:notation}, this is the time that the leader of platoon $i$ exits the control zone). The exit time $t_i^f$ corresponds to the unconstrained energy optimal trajectory for the platoon leader ensuring that the resulting trajectory does not activate any of \eqref{eq:model2} - \eqref{eq:rearendInterPlatoon} and \eqref{eq:lateralMinSafety}.
The \emph{unconstrained solution} of the leader of platoon $i$ is given by \cite{Malikopoulos2020}  
\begin{align} \label{eq:optimalTrajectory}
    u_i(t) &= 6 a_i t + 2 b_i, \notag \\
    v_i(t) &= 3 a_i t^2 + 2 b_i t + c_i, \\
    p_i(t) &= a_i t^3 + b_i t^2 + c_i t + d_i, \notag
\end{align}
where $a_i, b_i, c_i, d_i$ are constants of integration.
The leader of platoon $i$ must also satisfy the boundary conditions

\begin{align}
    p_i(t_i^0) &= p_i^0, \quad& v_i(t_i^0) &= v_i^0, \label{eq:IC} \\
    p_i(t_i^f) &= p_i^f, \quad& u_i(t_i^f) &= 0, \label{eq:BC}
\end{align}
where $p_i$ is known at $t_i^0$ and $t_i^f$ by the geometry of the road, and $v_i^0$ is the speed at which the leaders of platoon $i$ enters the control zone.
The final boundary condition, $u_i(t_i^f) = 0$, results from $v_i(t_i^f)$ being left unspecified \cite{bryson1975applied}. There are five unknown variables that determine the optimal trajectory of the leader of the platoon $i$, four constants of integration from \eqref{eq:optimalTrajectory}, and the unknown exit time $t_i^f$. The value of $t_i^f$ guarantees that the unconstrained trajectories in \eqref{eq:optimalTrajectory} satisfy all the state, control, and safety constraints in \eqref{eq:uconstraint}, \eqref{eq:vconstraint} and \eqref{eq:rearendInterPlatoon}, respectively, and the boundary conditions in \eqref{eq:BC}.
In practice, for the leader of each platoon $i\in\mathcal{L}(t)$, the coordinator stores the optimal exit time $t_i^f$ and the corresponding coefficients $a_i, b_i, c_i, d_i$. We denote the coefficients of the optimal control policy for leader of platoon $i\in\mathcal{L}(t)$ by vector $\boldsymbol{\phi}_i =[a_i, b_i, c_i, d_i]^T$, which is an element of platoon information set for the leader of platoon $i\in\mathcal{L}(t)$ (Definition \ref{def:information_set}). 
%It has been shown \cite{Malikopoulos2020} that there is no duality gap in \eqref{eq:tif}, thus the solution can be derived in real time.}
%\todobehdad{@Ishti: I am not sure if we can conclude that since there is no duality gap, we can derive the solution in real time.}
%@behdad: I commented out the duality gap portion.
We formally define our single-level optimization framework for platoon leaders as follows.
\begin{problem}\label{prb:singleLevelProblemPlatoons}
Upon entering the control zone, each leader of platoon $i\in\mathcal{L}(t)$ accesses the information set $\mathcal{I}_i$ and solves the following optimization problem at $t_i^0$
\end{problem}

\begin{align}\label{eq:tif}
    &\min_{t_i^f\in \mathcal{T}_i(t_i^0)} t_i^f \\
    \text{subject to: }& \notag\\
    & \eqref{eq:rearendInterPlatoon}, \eqref{eq:lateralMinSafety},\eqref{eq:optimalTrajectory},  \notag
\end{align}
\textit{where the compact set $\mathcal{T}_i(t_i^0)=[\underline{t}_i^f, \overline{t}_i^f]$ is the set of feasible solution of leader of platoon $i\in\mathcal{N}(t)$ for the exit time that satisfy the boundary conditions without activating the constraints, while $\underline{t}_i^f$ and $\overline{t}_i^f$  denote the minimum and maximum feasible exit time computed at $t_i^0$}. 

\begin{remark}
We can derive the optimal control input of the platoon leaders using the solution of Problem \ref{prb:singleLevelProblemPlatoons}, $t_i^f$, the boundary conditions \eqref{eq:IC}-\eqref{eq:BC} and  \eqref{eq:optimalTrajectory}.
\end{remark}

%introducing formulas for deriving the compact set to avoid confusion for the reviewers
In what follows, we continue our exposition by briefly reviewing the process to compute the compact set $\mathcal{T}_i(t_i^0)$ at time $t_i^0$ using the speed and control input constraints \eqref{eq:uconstraint}-\eqref{eq:vconstraint}, initial condition \eqref{eq:IC}, and final condition \eqref{eq:BC}. Details regarding the derivation of the compact set $\mathcal{T}_i(t_i^0)$ can be found in \cite{chalaki2020experimental}.

The lower-bound $\underline{t}_{i}^f$ of $\mathcal{T}_i(t_i^0)$ can be computed by considering the state and control constraints  and boundary conditions as
 \begin{equation}\label{eq:minbound}
     \underline{t}_{i}^f = \min \left\{ t_{i,u_{\max}}^f  , t_{i,v_{\max}}^f \right \},
 \end{equation}
 where, 
\begin{align*}
         t_{i,v_{\max}}^f &= \frac{3 (p_i(t_i^f)-p_i(t_i^0))}{v_i(t_i^0) + 2 v_{\max}},\\
         t_{i,u_{\max}}^f &= \frac{\sqrt{9 {v_i(t_i^0)}^2 + 12 (p_i(t_i^f)-p_i(t_i^0)) u_{\max}} - 3 {v_i(t_i^0)}}{2 u_{\max}}.
\end{align*}
Here, $t_{i,v_{\max}}^f$ and $ t_{i,u_{\max}}^f$ are the times which leader of platoon $i\in\mathcal{L}(t)$ achieves its maximum speed at the end of control zone and its maximum control input at the entry of the control zone, respectively. 
Similarly, we derive the upper-bound $\overline{t}_{i}^f$ as 
\begin{equation}\label{eq:maxbound}
        \overline{t}_{i}^f = 
        \begin{cases}
            t^f_{i,v_{\min}},&\text{ if }\ 9 {v_i(t_i^0)}^2 + 12 (p_i(t_i^f)-p_i(t_i^0)) u_{\min} < 0,\\
            \max \{ t_{i,u_{\min}}^f  , t_{i,v_{\min}}^f\} ,&\text{otherwise,}\ \\
        \end{cases}
\end{equation}
 where 
\begin{align*}
        t^f_{i,v_{\min}} &= \frac{3 (p_i(t_i^f)-p_i(t_i^0))}{v_i(t_i^0) + 2 v_{\min} },\\
        t^f_{i,u_{\min}} &= \frac{\sqrt{9 {v_i(t_i^0)}^2 + 12 (p_i(t_i^f)-p_i(t_i^0)) u_{\min}} -3 v_i(t_i^0) }{2 u_{\min} }. 
\end{align*}
Similar to the previous case, $t_{i,v_{min}}^f$ and $ t_{i,u_{\min}}^f$ are the times at which leader of platoon $i\in\mathcal{L}(t)$ achieves its minimum speed at the end of control zone and its minimum control input at the entry of the control zone, respectively.

%transition to the next subsection
Note that, the solution to the optimal control problem \ref{prb:singleLevelProblemPlatoons} yields the optimal control input $u_i^*(t)$ for each platoon leader $i\in\mathcal{L}(t)$ for $t\in[t_i^0, t_i^f]$. However, the solution to this problem does not consider the stability criteria of the platoon \cite{naus2010string}, which is essential to guarantee safety within the platooning CAVs. In the following section, we first introduce the notion of stability during platoon coordination, and then propose a control structure $u_{i,j}(t)$ for each platoon member $j\in\mathcal{N}_i\setminus\{0\}$, $i\in\mathcal{L}(t)$ that is optimal subject to constraints, and satisfies the stability properties.

\subsection{Optimal Control of Followers Within Each Platoon}\label{subsec:follower_control}

Stability properties of the platoon system are well discussed in the literature \cite{feng2019string, morbidi2013decentralized, naus2010string}. In general, there are two types of stability: (a) local stability, which describes the ability of each platoon member converging to a given trajectory, and (b) string stability, where any bounded disturbance introduced into the platoon are not amplified while propagating downstream along the vehicle string. In this paper, we adopt the following definition of platoon stability that encompasses the above stability notions \cite{feng2019string}.

\begin{definition}\label{def:string_stability}
A platoon $i\in\mathcal{L}(t)$ is stable if, for any bounded initial disturbances to all the CAVs $j\in\mathcal{N}_i$, the position fluctuations of all the CAVs remain bounded (string stability) and approach zero as time goes to infinity (local stability).
\end{definition}

%Note that, the invariance of the bounds under the platoon length is an important property, which guarantees that the notion of string stability is scalable and allows for the addition or removal of vehicles from a string without affecting the string stability of the platoon. 

With the stability properties Definition \ref{def:string_stability}, we introduce the control problem of each platoon member $j\in\mathcal{N}_i\setminus\{i\}$, $i\in\mathcal{L}(t)$.

\begin{problem}\label{problem:platoon_member_control}
Each platoon member $j\in\mathcal{N}_i\setminus\{0\}$, $i\in\mathcal{L}(t)$ needs to derive its control input $u_{i,j}(t)$ for all $t\in[t_i^0, t_i^f]$ that 
\begin{enumerate}
    \item is energy and time-optimal subject to the state and control constraints in \eqref{eq:uconstraint}-\eqref{eq:vconstraint}, and rear-end collision avoidance constraint in \eqref{eq:rearendIntraPlatoon}, and
    \item satisfying the stability properties according to Definition \ref{def:string_stability}.
\end{enumerate}
\end{problem}

We provide the following proposition that addresses the Problem \ref{problem:platoon_member_control}.
\begin{proposition}\label{prop:platoon_member_control}
For each platoon member $j\in\mathcal{N}_i\setminus\{0\}$ in the platoon $i\in\mathcal{L}(t)$, the optimal control input $u_{i,j}(t)=u^{*}_{i,0}(t)$, where $u^{*}_{i,0}(t)$ is the solution to Problem \ref{prb:singleLevelProblemPlatoons}, is an optimal solution to Problem \ref{problem:platoon_member_control}.
\end{proposition}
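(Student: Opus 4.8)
The plan is to prove the claim in three steps --- feasibility, optimality with respect to the energy/time objective, and the stability property of Definition \ref{def:string_stability} --- all resting on one observation: if every CAV of platoon $i$ applies the same control input, the double-integrator dynamics \eqref{eq:model2} preserve speed and position differences, so the platoon moves rigidly. For feasibility I would integrate \eqref{eq:model2} under $u_{i,j}(t)=u^{*}_{i,0}(t)$ for all $j\in\mathcal{N}_i$ with the in-formation initial data $v_{i,j}(t_i^0)-v_{i,j-1}(t_i^0)=0$ and $p_{i,j-1}(t_i^0)-p_{i,j}(t_i^0)=\Delta_i+l_c$, obtaining $v_{i,j}(t)=v_{i,0}(t)$ and $p_{i,j}(t)=p_{i,0}(t)-j(\Delta_i+l_c)$ on $[t_i^0,t_i^f]$. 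Then \eqref{eq:rearendIntraPlatoon} holds with equality; the speed bound \eqref{eq:vconstraint} holds for each follower since it holds for the leader's trajectory determined by the solution of Problem \ref{prb:singleLevelProblemPlatoons}; and the control bound \eqref{eq:uconstraint} holds because $u_{i,j}=u^{*}_{i,0}$ is exactly the leader's feasible control. Assumption \ref{assumption:platoon} makes the initial state admissible, so the candidate is feasible throughout.

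For optimality I would argue by induction on $j$, so that the predecessor $j-1$ may be taken to follow $u^{*}_{i,0}$, and set up the follower's energy-optimal problem: minimize $\int_{t_i^0}^{t_i^f}\tfrac12 u_{i,j}(t)^2\,dt$ subject to \eqref{eq:model2}, the forced initial state $(p_{i,0}(t_i^0)-j(\Delta_i+l_c),\,v_{i,0}(t_i^0))$, the terminal position $p_{i,j}(t_i^f)=p_{i,0}(t_i^f)-j(\Delta_i+l_c)$ imposed by formation maintenance, terminal speed free, and the path constraint \eqref{eq:rearendIntraPlatoon}. The constant shift $p\mapsto p-j(\Delta_i+l_c)$ is a cost-preserving bijection from the leader's feasible set to the follower's (it does not change $u=\ddot p$) and carries the leader's optimizer to a point that satisfies the same Euler--Lagrange equation --- a cubic as in \eqref{eq:optimalTrajectory} --- together with the transversality condition $u_{i,j}(t_i^f)=0$; by strict convexity of the quadratic cost over this affine set the optimizer is unique, hence it is precisely the shifted leader trajectory and its control equals $u^{*}_{i,0}$. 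Since that trajectory coincides identically with the surface $p_{i,j}(t)=p_{i,0}(t)-j(\Delta_i+l_c)$, the inequality \eqref{eq:rearendIntraPlatoon} and the state/control bounds are never violated along it, so the unconstrained optimizer solves the constrained problem as well. Time-optimality is inherited from the leader: $t_i^f$ is fixed by the time-optimal solution of Problem \ref{prb:singleLevelProblemPlatoons} and a follower cannot shorten it without leaving formation or disturbing the downstream coordination of later platoons.

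For stability, with the spacing error $e_{i,j}(t)=p_{i,j-1}(t)-p_{i,j}(t)-(\Delta_i+l_c)$, the first step gives $e_{i,j}(t)\equiv 0$ on the nominal trajectory, so position fluctuations are trivially bounded and vanish, matching Definition \ref{def:string_stability}. I expect the genuine difficulty to be twofold. First, disturbance rejection: under a velocity perturbation the open-loop input $u^{*}_{i,0}$ alone yields a linearly growing $e_{i,j}$, so to honor Definition \ref{def:string_stability} in full one must wrap the optimal feedforward $u^{*}_{i,0}$ in a standard string-stable feedback correction (cf. \cite{naus2010string,feng2019string}) that vanishes on the nominal trajectory and hence does not affect the optimality of the second step. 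Second, and more fundamental, justifying the follower's terminal boundary condition rigorously --- that formation maintenance forces exactly the offset $j(\Delta_i+l_c)$ and the leader's exit speed, not merely an inequality --- relies on the modeling assumptions of Section \ref{sec:problem_formulation} (post-exit cruising and hand-off to the car-following model) rather than on optimal-control arguments, and this is the step I would handle most carefully.
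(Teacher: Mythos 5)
Your argument is correct and reaches the same conclusion, but it is organized and justified quite differently from the paper. The paper splits the claim into two lemmas: Lemma \ref{lem:follower_control_1} handles optimality and constraint satisfaction, and Lemma \ref{lem:follower_control_2} handles stability. Your feasibility step (rigid translation of the leader's trajectory under identical control, so \eqref{eq:uconstraint}--\eqref{eq:vconstraint} are inherited and \eqref{eq:rearendIntraPlatoon} holds with equality) is essentially identical to the paper's constraint-satisfaction argument. For optimality, however, the paper simply asserts that since $u_{i,j}=u^*_{i,0}$ reproduces the linear-control/quadratic-speed/cubic-position form of \eqref{eq:optimalTrajectory}, the follower's trajectory is optimal; you instead set up the follower's energy functional explicitly, exhibit the constant position shift as a cost-preserving bijection between the leader's and follower's feasible sets, and invoke strict convexity plus the Euler--Lagrange/transversality conditions to get uniqueness. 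That is a genuinely stronger justification of the same step, at the price of having to commit to a terminal condition ($p_{i,j}(t_i^f)=p_{i,0}(t_i^f)-j(\Delta_i+l_c)$) that, as you note, comes from the modeling assumptions rather than from Problem \ref{problem:platoon_member_control} as literally stated --- the paper leaves the follower's objective equally unformalized, so neither treatment fully closes this. For stability the routes diverge: the paper proves string stability via the frequency-domain sufficient condition $\|u_{i,j}(s)/u_{i,j-1}(s)\|_\infty\le 1$, which is trivially met with equality since all inputs coincide, and argues local stability from instantaneous intra-platoon communication; you work directly with the spacing error $e_{i,j}$.

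Your caveat about disturbance rejection is a substantive observation rather than a flaw in your own proof: under a genuine initial velocity perturbation the purely open-loop input $u^*_{i,0}$ produces a linearly growing spacing error, so Definition \ref{def:string_stability} in its ``for any bounded initial disturbances'' form is not literally delivered by the feedforward law alone, and the paper's transfer-function argument sidesteps this by evaluating the ratio of identical nominal inputs. Under Assumption \ref{assumption:platoon} (feasible, already-formed platoon at entry) the nominal analysis suffices for the paper's purposes, but your proposed string-stable feedback correction vanishing on the nominal trajectory is what a fully rigorous treatment of the disturbed case would require.
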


Next, we provide the proof of Proposition \ref{prop:platoon_member_control} using the following Lemmas.

\begin{lemma}\label{lem:follower_control_1}
For each platoon member $j\in\mathcal{N}_i\setminus\{0\}$ in each platoon $i\in\mathcal{L}(t)$, the control input $u_{i,j}(t)=u^{*}_{i,0}(t)$  for all $t\in[t_i^0, t_i^f]$ is energy- and time-optimal subject to the control \eqref{eq:uconstraint}, state \eqref{eq:vconstraint} and safety constraint \eqref{eq:rearendIntraPlatoon}.
\end{lemma}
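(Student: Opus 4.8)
The plan is to show that the leader's optimal control input, when applied verbatim by each follower, satisfies all three constraints of Lemma~\ref{lem:follower_control_1} and is simultaneously energy- and time-optimal for the follower. First I would set up the follower's trajectory. Since every follower $j\in\mathcal{N}_i\setminus\{0\}$ starts in formation at $t_i^0$ with $v_{i,j}(t_i^0)=v_{i,0}(t_i^0)$ and $p_{i,j}(t_i^0)=p_{i,0}(t_i^0)-j(\Delta_i+l_c)$ (iterating the formation conditions $v_{i,j-1}(t_i^0)-v_{i,j}(t_i^0)=0$ and $p_{i,j-1}(t_i^0)-p_{i,j}(t_i^0)-l_c=\Delta_i$ stated before \eqref{eq:uconstraint}), integrating $\dot v_{i,j}=u^{*}_{i,0}$ with these initial data yields $v_{i,j}(t)=v_{i,0}(t)$ and $p_{i,j}(t)=p_{i,0}(t)-j(\Delta_i+l_c)$ for all $t\in[t_i^0,t_i^f]$. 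In other words the follower executes a rigid spatial translation of the leader's trajectory.

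With this identity in hand the constraint checks are immediate. The control constraint \eqref{eq:uconstraint} holds because $u_{i,j}=u^{*}_{i,0}$ and $u^{*}_{i,0}$ already satisfies \eqref{eq:uconstraint} as a solution of Problem~\ref{prb:singleLevelProblemPlatoons}; the speed constraint \eqref{eq:vconstraint} holds because $v_{i,j}(t)=v_{i,0}(t)\in[v_{\min},v_{\max}]$; and the intra-platoon rear-end constraint \eqref{eq:rearendIntraPlatoon} holds with equality, since $p_{i,j-1}(t)-p_{i,j}(t)=\bigl(p_{i,0}(t)-(j-1)(\Delta_i+l_c)\bigr)-\bigl(p_{i,0}(t)-j(\Delta_i+l_c)\bigr)=\Delta_i+l_c$ for every $t$. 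Thus $u_{i,j}=u^{*}_{i,0}$ is feasible.

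For optimality I would argue that the follower's problem, under the cost $\int (u_{i,j})^2\,dt$ weighted against travel time (the same energy-time trade-off as in Problem~\ref{prb:singleLevelProblemPlatoons} and \cite{Malikopoulos2020}), has exactly the same structure as the leader's problem: double-integrator dynamics \eqref{eq:model2}, the same control and speed bounds, a fixed entry time $t_i^0$, and — because the follower must remain in the feasible platoon formation and exit when the leader does — a boundary condition obtained from the leader's by a constant position shift of $-j(\Delta_i+l_c)$ and an identical terminal-speed-free condition $u_{i,j}(t_i^f)=0$. A constant spatial translation does not change the Hamiltonian, the Euler--Lagrange equations, or the transversality conditions, so the optimal follower trajectory is the leader's optimal trajectory shifted by $-j(\Delta_i+l_c)$, whose control is precisely $u^{*}_{i,0}$. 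Any competing feasible follower control with strictly lower cost would, after the reverse shift, yield a strictly lower-cost feasible leader control, contradicting optimality of $u^{*}_{i,0}$; this gives the result.

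The main obstacle I anticipate is not the algebra but pinning down \emph{which} optimization the follower is solving — in particular, justifying that the follower inherits the leader's exit time $t_i^f$ as a hard terminal constraint (so that "time-optimal" for the follower is trivially met once it matches the leader) rather than being allowed to re-optimize its own $t_i^f$. This must be read off from the platoon-coordination setup: the leader's chosen $t_i^f$ already accounts, via Remark~\ref{rem:t_i,m^f} and \eqref{eq:tfLastVehicle}, for when the whole platoon clears the conflict point, and the followers are required to preserve the formation throughout $[t_i^0,t_i^f]$, which forces $v_{i,j}(t)=v_{i,0}(t)$ and hence leaves no freedom beyond $u^{*}_{i,0}$. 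Once that modeling point is stated cleanly, the proof is short.
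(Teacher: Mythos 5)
Your proposal is correct and follows essentially the same route as the paper: both establish that $u_{i,j}=u^{*}_{i,0}$ makes each follower's trajectory a rigid spatial translate of the leader's, from which the control, speed, and intra-platoon gap constraints follow immediately (the gap staying constant at $\Delta_i+l_c$). Your optimality step is actually more careful than the paper's --- the paper simply asserts that the follower's control ``also generates optimal linear control, quadratic speed and cubic position trajectories as in \eqref{eq:optimalTrajectory},'' whereas you justify this via translation invariance of the underlying optimal control problem and an exchange argument, which closes a gap the paper leaves implicit.
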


\begin{proof}
(a) Optimality: We derive the control input $u^*_{i,0}(t)$ of the leading CAV $i\in\mathcal{L}(t)$ by solving Problem \ref{prb:singleLevelProblemPlatoons}. The optimal trajectory of the leader of platoon $i\in\mathcal{L}(t)$ is given by \eqref{eq:optimalTrajectory} for all $t\in[t_i^0, t_i^f]$. Thus, for each platoon member $j\in\mathcal{N}_i\setminus\{0\}$, the control input $u_{i,j}(t)$ such that $u_{i,j}(t)=u^{*}_{i,0}(t)$ also generates optimal linear control, quadratic speed and cubic position trajectories as in \eqref{eq:optimalTrajectory}.

(b) Constraint satisfaction: Since $u_{i,j}(t)=u^{*}_{i,0}(t)$, for each platoon member $j\in\mathcal{N}_i\setminus\{0\}$, $i\in\mathcal{L}(t)$, we have $ v_{i,j}(t)=v_{i,0}^*(t)$ for all $t\in[t_i^0, t_i^f]$. The trajectories $v_{i,0}^*(t)$ and $u_{i,0}^*(t)$ do not violate any constraints in \eqref{eq:uconstraint}-\eqref{eq:vconstraint} since they are derived by solving Problem \ref{prb:singleLevelProblemPlatoons}. Therefore, the trajectories $v_{i,j}(t)$ and $u_{i,j}(t)$ of each platoon member $j$ are ensured to satisfy constraints in \eqref{eq:uconstraint}-\eqref{eq:vconstraint}. Additionally, if $ u_{i,j}(t)=u_{i,0}^*(t)$, the inter-vehicle gap $p_{i,j-1}(t)-p_{i,j}(t)-l_c$ between two consecutive platoon members $j,j-1\in\mathcal{N}_i, ~i\in\mathcal{L}(t)$ is time invariant, and equal to $\Delta_i$. %This means that if Assumption \ref{assumption:platoon} holds, then the intra-platoon rear-end collision avoidance constraint in \eqref{eq:rearendIntraPlatoon} is never violated.
Thus, the rear-end safety within CAVs within the platoon $i$ in \eqref{eq:rearendIntraPlatoon} is guaranteed to be satisfied.
\end{proof}

\begin{lemma}\label{lem:follower_control_2}
Each platoon member $j\in\mathcal{N}_i\setminus\{0\}$ for each platoon $i\in\mathcal{L}(t)$ with the control input $u_{i,j}(t)=u^{*}_{i,0}(t)$ for all $t\in[t_i^0, t_i^f]$ is locally stable, and the resulting platoon $i$ is string stable.
\end{lemma}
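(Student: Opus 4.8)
The plan is to handle the two components of Definition~\ref{def:string_stability} --- boundedness (string stability) and asymptotic convergence to zero (local stability) --- separately, exploiting the crucial structural feature that the commanded input $u_{i,j}(t)=u^{*}_{i,0}(t)$ is a pure feedforward term, identical for every member of platoon $i$ and independent of the state of any other vehicle. First I would introduce error coordinates: for each member $j\in\mathcal{N}_i\setminus\{0\}$ let $\tilde p_{i,j}(t)=p_{i,j}(t)-p^{*}_{i,j}(t)$ and $\tilde v_{i,j}(t)=v_{i,j}(t)-v^{*}_{i,j}(t)$ be the deviations from the nominal (disturbance-free) trajectory, where $p^{*}_{i,j},v^{*}_{i,j}$ are the curves in \eqref{eq:optimalTrajectory} shifted by the constant platoon offset $j(\Delta_i+l_c)$. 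Substituting $u_{i,j}(t)=u^{*}_{i,0}(t)$ into the double integrator \eqref{eq:model2} yields $\dot{\tilde p}_{i,j}=\tilde v_{i,j}$ and $\dot{\tilde v}_{i,j}=0$, i.e.\ $\ddot{\tilde p}_{i,j}(t)\equiv 0$; in particular the error of member $j$ is completely decoupled from the errors of members $j-1$ and $j+1$.

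For string stability I would look at the spacing error $z_{i,j}(t)=p_{i,j-1}(t)-p_{i,j}(t)-(\Delta_i+l_c)$ and note that $\ddot z_{i,j}=u_{i,j-1}-u_{i,j}=u^{*}_{i,0}-u^{*}_{i,0}=0$, so a disturbance entering at member $j-1$ never forces the dynamics of $z_{i,j}$: the map $\Gamma$ from the disturbance of one member to that of its successor has zero gain, which is the (trivial) string-stability condition $\|\Gamma\|_{\infty}\le 1$. Integrating $\ddot{\tilde p}_{i,j}=0$ on the compact interval $[t_i^0,t_i^f]$ gives $\tilde p_{i,j}(t)=\tilde p_{i,j}(t_i^0)+\tilde v_{i,j}(t_i^0)(t-t_i^0)$, whose magnitude is bounded by a constant that depends only on the initial disturbance of member $j$ itself and not on how many vehicles precede it; this delivers the boundedness required by Definition~\ref{def:string_stability}.

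For local stability I would combine this finite-horizon bound with the hand-off described in Section~\ref{sec:problem_formulation}: after the platoon clears the control zone, each member switches to an asymptotically stable car-following controller \cite{wiedemann1974} that drives $z_{i,j}$ back to its equilibrium value, so the cascade ``bounded transient on $[t_i^0,t_{i,m_i}^f]$'' followed by ``asymptotically stable car-following phase'' makes the position fluctuations vanish as $t\to\infty$. Alternatively, under Assumption~\ref{assumption:platoon} together with the fact that the platoon forms with exactly the spacing $\Delta_i$ and zero relative speed, one has $\tilde p_{i,j}(t_i^0)=\tilde v_{i,j}(t_i^0)=0$, so the fluctuations are identically zero on $[t_i^0,t_i^f]$ and both requirements hold trivially.

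The main obstacle is precisely the ``approach zero as $t\to\infty$'' clause: the feedforward law $u_{i,j}=u^{*}_{i,0}$ has no corrective feedback, so by itself it is only marginally stable --- $\ddot{\tilde p}_{i,j}=0$ has a double eigenvalue at the origin --- and cannot attenuate an initial velocity mismatch. Making the claim airtight therefore hinges on being precise about what the error dynamics are at and beyond $t_i^f$, i.e.\ the transition to the stable car-following model, or on justifying the zero-initial-error reading above; everything else reduces to the elementary linear computation sketched here.
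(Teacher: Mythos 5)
Your proof is correct and, if anything, more careful than the paper's, but it takes a different route. For string stability the paper invokes the frequency-domain sufficient condition $\left\| \frac{u_{i,j}(s)}{u_{i,j-1}(s)} \right\|_\infty \le 1$ from the cited literature and observes that identical inputs make this ratio exactly $1$; your time-domain argument that the spacing error satisfies $\ddot z_{i,j}=u^{*}_{i,0}-u^{*}_{i,0}=0$, so the member-to-member disturbance map has zero gain, is the more elementary statement of the same fact and additionally produces the explicit bound $\tilde p_{i,j}(t)=\tilde p_{i,j}(t_i^0)+\tilde v_{i,j}(t_i^0)(t-t_i^0)$ showing no amplification down the string. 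For local stability the paper argues only that, with zero intra-platoon communication delay, each member's speed ``converges instantaneously'' to $v^{*}_{i,0}(t)$ --- which, as you correctly observe, holds only because the platoon enters the control zone with zero relative speed and exact spacing ($v_{i,j-1}(t_i^0)-v_{i,j}(t_i^0)=0$ and $p_{i,j-1}(t_i^0)-p_{i,j}(t_i^0)-l_c=\Delta_i$), i.e., your ``zero-initial-error reading''; the paper does not invoke the downstream car-following hand-off in this proof, so that reading is the one matching its intent. Your explicit acknowledgment that the pure feedforward law $\ddot{\tilde p}_{i,j}=0$ is only marginally stable and cannot attenuate a nonzero initial velocity mismatch is a genuine point the paper glosses over: your version buys transparency about where the asymptotic-convergence clause really comes from, while the paper's buys brevity by citing a standard transfer-function criterion and leaning implicitly on the platoon-formation boundary conditions.
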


\begin{proof}
%Suppose that, the state $p_{i,0}(t), v_{i,0}(t)$ and control input $u_{i,0}(t)$ of the leader of platoon $i\in\mathcal{L}(t)$ get updated at time interval $\Delta t$ to be $p_{i,0}(t+\Delta t), v_{i,0}(t+\Delta t), u_{i,0}(t+\Delta t)$. We need to prove that each platoon member $j\in\mathcal{N}_i\setminus\{0\}, ~i\in\mathcal{L}(t)$ is locally stable, and the platoon is string stable subject to the disturbance introduced by the leader.

(a) Local stability: Since $u_{i,j}(t)=u_{i,0}^*(t)$, for each platoon member $j\in\mathcal{N}_i\setminus\{0\}$, $i\in\mathcal{L}(t)$, we have $ v_{i,j}(t)=v_{i,0}^*(t)$ for all $t\in[t_i^0, t_i^f]$. Since there is no communication delay within the unidirectional intra-platoon communication protocol, the speed of each platoon member $j$ converges instantaneously to the speed of the platoon leader $v_{i,0}^*(t)$, which implies local stability. 

(b) String stability: A sufficient condition for the string stability
of a platoon $i\in\mathcal{L}(t)$ containing CAVs $j\in\mathcal{N}_i$ is $\| \frac{u_{i,j}(s)}{u_{i,j-1}(s)} \|_\infty \le 1$ \cite{morbidi2013decentralized}, where $u_{i,j}(s)$ is the Laplace transform of the control input $u_{i,j}(t)$. Since $ u_{i,j}(t)=u_{i,0}^*(t)$ for all $t\in[t_i^0,t_i^f]$, we have $ u_{i,j}(s)=u_{i,0}^*(s)$ for all $j\in\mathcal{N}_i\setminus\{0\}$, which yields $\| \frac{u_{i,j}(s)}{u_{i,j-1}(s)} \|_\infty = 1$. Thus, each platoon $i\in\mathcal{L}(t)$ is string stable.
\end{proof}

\subsection{Delay in Platoon Communication}\label{subsec:delay}
In this section, we enhance our framework to include delay in the bi-directional inter-platoon communication. From Assumption \ref{assumption: delay}, we know that delay is bounded and this bound is known a priori. In particular, suppose the delay in bi-direction communication of platoon leaders takes values in $[\tau_{\min},\tau_{\max}]$, where  $\tau_{\min}\in\mathbb{R}_{\geq 0}$ and $\tau_{\max}\in\mathbb{R}_{\geq 0}$ correspond to the minimum and maximum communication delay, respectively. To account for the effects of communication delays in our framework, we consider the worst-case scenario. Namely, we
consider that it takes $0.5 ~{\tau_{\max}}$ until the coordinator receives the request from the platoon leader, and it takes an extra $0.5~{\tau_{\max}}$ for the leader of platoon $i\in\mathcal{L}(t)$ to receive the platoon information $\mathcal{I}_i$. Thus, the leader needs to cruise with the constant speed that it entered the control zone for $\tau_{\max}$ until it receives the platoon information $\mathcal{I}_i$ to plan its optimal trajectory. After receiving this information, the platoon leader computes the compact set of the feasible solution $\mathcal{T}_i$ at time $t_i^0 + \tau_{\max}$ with initial condition $v_i(t_i^0 + \tau_{\max})$ and $p_i(t_i^0 + \tau_{\max})$. Using the compact set $\mathcal{T}_i(t_i^0 + \tau_{\max})$  of the feasible solution, the leader derives its optimal control policy by solving Problem $\ref{prb:singleLevelProblemPlatoons}$. Then, it sends the computed trajectory at time $t_i^0 + \tau_{\max}$ to the coordinator. In the worst-case scenario, the coordinator receives this information after $0.5{\tau_{\max}}$ at $t_i^0 + 1.5 \tau_{\max}$. To ensure that new arriving platoons have access to this information, we need to have the following constraint on the initial conditions of platoons upon entrance the control zone.
 \begin{proposition}\label{prop:delayCond}
 Let platoons $i$ and $j$, $i, j\in\mathcal{L}(t)$, enter the control zone at time $t_i^0$ and $t_j^0>t_i^0,$ respectively. In the presence of a bi-directional inter-platoon communication delay, which takes value in $[\tau_{\min},\tau_{\max}]$, the optimal trajectory of platoon $i$ is accessible to platoon $j$, if $t_j^0 -t_i^0 \geq \tau_{\max}.$
 \end{proposition}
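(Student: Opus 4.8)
The plan is to prove the proposition by a direct timeline accounting of the information flow described in the paragraph preceding the statement: I would compute the latest instant at which the optimal policy $\boldsymbol{\phi}_i$ of platoon $i$ (together with $t_i^f$ and $M_i$) is lodged at the coordinator, compute the instant at which the leader of platoon $j$ queries the coordinator, and require the former to precede the latter. This suffices because, by Definition~\ref{def:information_set}, the leader of platoon $j$ obtains exactly the contents of the coordinator's database at the moment its one-time query (issued at entrance $t_j^0$) is served, so ``$\boldsymbol{\phi}_i$ is accessible to platoon $j$'' is equivalent to ``$\boldsymbol{\phi}_i$ is already stored at the coordinator when platoon $j$'s request arrives there.''

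First I would track platoon $i$. Its leader enters at $t_i^0$ and immediately issues a request; in the adopted worst case the request reaches the coordinator after $0.5\,\tau_{\max}$, the reply carrying $\mathcal{I}_i$ returns after a further $0.5\,\tau_{\max}$, so the leader holds $\mathcal{I}_i$ at $t_i^0+\tau_{\max}$, at which instant it computes $\mathcal{T}_i(t_i^0+\tau_{\max})$ and the policy $\boldsymbol{\phi}_i$ instantaneously (no intra-platoon delay, and the computation is treated as immediate) and transmits them back; this report again incurs at most $0.5\,\tau_{\max}$, so the coordinator stores $\boldsymbol{\phi}_i$, $t_i^f$, $M_i$ no later than $t_i^0+1.5\,\tau_{\max}$. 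Next I would track platoon $j$: its leader enters at $t_j^0$ and issues its request at once, which reaches the coordinator at $t_j^0+0.5\,\tau_{\max}$ in the worst case, and the coordinator then returns $\mathcal{I}_j$ containing whatever is on hand at that instant. Hence $\boldsymbol{\phi}_i$ is included in $\mathcal{I}_j$ precisely when $t_i^0+1.5\,\tau_{\max}\le t_j^0+0.5\,\tau_{\max}$, i.e. $t_j^0-t_i^0\ge\tau_{\max}$, which is the claimed condition; reading the chain of inequalities in the other direction shows it is sufficient as well.

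The step I expect to require the most care is the worst-case bookkeeping: one must be explicit that platoon $i$'s $1.5\,\tau_{\max}$ latency already uses the maximal half-delay on each of its three legs (request out, reply in, report out), whereas for platoon $j$ only the single leg of its request reaching the coordinator affects which data is served, so $\tau_{\max}$ is the only delay parameter that enters and $\tau_{\min}$ plays no role. Making this asymmetry precise — and noting that $M_i,t_i^0$ reach the coordinator already at $t_i^0+0.5\,\tau_{\max}$, so the binding quantity is indeed the return of $\boldsymbol{\phi}_i$ and $t_i^f$ — is the only subtle point; the remainder is elementary arithmetic.
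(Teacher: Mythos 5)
Your proposal is correct and follows essentially the same argument as the paper: track the worst-case arrival of $\boldsymbol{\phi}_i$ at the coordinator at $t_i^0+1.5\,\tau_{\max}$, the arrival of platoon $j$'s request at $t_j^0+0.5\,\tau_{\max}$, and compare. Your added bookkeeping about which communication legs matter is a slightly more explicit rendering of the same timeline, not a different route.
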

\begin{proof}
Platoon $i$ computes its optimal trajectory at $t_i^0+\tau_{\max}$, but in the worst-case scenario, due to delay in communication, this information becomes available to the coordinator at $t_i^0 + 1.5~\tau_{\max}$. On the other hand, upon entrance the control zone, platoon $j$ sends a request to the coordinator to receive platoon information $\mathcal{I}_j$. However, the coordinator receives this request at $t_j^0+0.5~\tau_{\max}$. In order to have the optimal trajectory of platoon $i$ accessible to platoon $j$ we need to have $t_j^0+0.5~\tau_{\max}\geq t_i^0 + 1.5~ \tau_{\max}$, and the result follows.
\end{proof}
\begin{remark}
We can ensure that the condition in Proposition $\ref{prop:delayCond}$ holds  by using an appropriate controller in the platooning zone upstream of the control zone.
\end{remark}

\subsection{Implementation of the Optimal Coordination Framework}\label{subsec:algo}
In Sections \ref{subsec:leader_control}, \ref{subsec:follower_control} and \ref{subsec:delay}, we provided the exposition of the intricacies of our proposed control framework for optimal platoon coordination. In this section, we introduce the approach that can be applied to implement this framework in real time.

While entering the control zone at time $t_i^0$, platoon leader $i\in\mathcal{L}(t)$ obtains the platoon information $\mathcal{I}_i$ from the coordinator and solves the optimization problem  \eqref{prb:singleLevelProblemPlatoons} by constructing the feasible set $\mathcal{T}_i(t)$ and iteratively checking the safety constraint. The resulting optimal exit time $t_i^f$ is then used along with the initial \eqref{eq:IC} and boundary \eqref{eq:BC} conditions to derive the vector of control input coefficients $\boldsymbol{\phi}_i$ using \eqref{eq:optimalTrajectory}. Subsequently, each CAV $j\in\mathcal{N}_i$ in platoon $i\in\mathcal{L}(t)$ computes its optimal control input $u_{i,j}(t)$ at each time instance $t\in[t_i^0, t_i^f]$ using $\boldsymbol{\phi}_i$. In what follows, we provide an algorithm that delineates the step-by-step implementation of the proposed optimal platoon coordination framework.

\begin{algorithm}
 \caption{Vehicular Platoons Coordination Algorithm}
% \begin{flushleft}
%        \textbf{Input:} \\
%        \textbf{Output:} 
%\end{flushleft}
%\Statex \textbf{Input:} $l_i^f$ \\
%\hspace*{\algorithmicindent} \textbf{Output:} Arrival time at merging zones $\mathcal{Z}_i = \{z_1,\dots,z_n\}$ 
%\renewcommand{\algorithmicrequire}{\textbf{Input:}}
%\renewcommand{\algorithmicensure}{\textbf{Output:}}

 \begin{algorithmic}[1]
 \For{ $i\in\mathcal{L}(t)$}
 \For{$j\in\mathcal{N}_i$}
 \If{j=0}\Comment{Platoon leader}

\State $u_{i,j} = 0 \quad \forall t\in[t_i^0, t_i^0+\tau_{\max})$ \Comment{Cruise with constant speed}
\State{\texttt{Compute} $\mathcal{T}_i (t_i^0+\tau_{\max})$}
\Comment{Based on \eqref{eq:minbound}-\eqref{eq:maxbound}}
\State{$t_{i}^f, \boldsymbol{\phi}_i \gets$ \textit{Platoon Leader Control()}} \Comment{Algorithm 2}
\State{$[a_i, b_i, c_i, d_i] \gets \boldsymbol{\phi}_i$}
\State{$u_{i,j}(t) \gets 6 a_i t + 2 b_i$} 
\Comment{$\forall t\in[t_i^0+\tau_{\max}, t_i^f]$}
 \Else \Comment{Platoon followers}
 \State { $u_{i,j}(t)=u_{i,0}(t)$}
 \EndIf
 \EndFor
  \EndFor
 %\State \Return $\{t_i^{z^\ast}~|~z\in\mathcal{Z}_i\}$
 \end{algorithmic} \label{Alg:VehicularPlatoonsAlgorithm}
 \end{algorithm}

\begin{algorithm}
 \caption{Platoon Leader Control}
 \begin{flushleft}
        \textbf{Input:} Platoon Information set $\mathcal{I}_i$, Compact feasible set $\mathcal{T}_i (t_i^0+\tau_i)=[\underline{t}_i^f, \overline{t}_i^f]$\\
        \textbf{Output:} Exit time $t_{i}^f$, Coefficients of the optimal control policy $\boldsymbol{\phi}_i $
\end{flushleft}

 \begin{algorithmic}[1]
 \State{$t_{i}^f \gets \underline{t}_i^f$}
\State{$k \gets$ platoon physically located in front of platoon $i$ }
\State{$p_{k,m_k}(t)\gets p_{k}(t) -(M_k -1) (\Delta_k+l_c) $}\Comment{Position of the last follower}
\While{$p_{k,m_k}(t)-p_i(t) < \delta_i(t)$}
\Comment{Rear-end safety}
 \State{$t_{i}^f \gets t_{i}^f+dt$}
\EndWhile
\State{\textit{lateral} $\gets$ list of all platoons $j<i$ from the other road}
\For{$j \in~$ \textit{lateral}}
\State{\texttt{Compute} $t_{j,m_j}^f$ and $t_{i,m_i}^f$ from \eqref{eq:tfLastVehicle}}
\While{$t_{i}^f - t_{j,m_j}^f < t_h$ \texttt{AND} $t_{j}^f - t_{i,m_i}^f < t_h$}\Comment{Lateral safety}
 \State{$t_{i}^f \gets t_{i}^f+dt$}
\EndWhile
\EndFor
\State{\texttt{Compute} $\boldsymbol{\phi}_i$} \Comment{From \eqref{eq:optimalTrajectory}-\eqref{eq:BC}}

 \end{algorithmic} \label{Alg:Platoon LeaderControl}
 \end{algorithm}

\section{Simulation Example}\label{sec:simul}
%\todoIshti{Placeholder for now. To be updated.}
%\todobehdad{1. speed vs distance plot for main road and merging road, in the optimal case and the baseline(same initial condition, but desired speed set as desire speed) 2. Showing headway for different platoons 3. Another baseline could be without the platoon to show the improvement of the platoon forming}
\subsection{Simulation Setup }
To evaluate and validate the performance of our proposed optimal platoon coordination framework, we employ the microscopic traffic simulation software VISSIM v$11.0$ \cite{vissim}. We create a simulation environment with a highway on-ramp merging, which has a control zone of length $560$ m. In our simulation framework, we use VISSIM's \textit{component object model} (COM) interface with Python $2.7$ to generate platoons of CAVs on the main road and on the on-ramp at different time intervals. The time interval between two consecutive platoon generations is randomized with a uniform probability distribution, and the bounds can be controlled to increase or decrease the traffic volume in each roadway. The length of each platoon is also randomly selected from a set of $2$ to $4$ vehicles with equal probability. The speed limit of each roadway is set to be $16.67$ m/s, and the maximum and minimum acceleration limit is $3$ m/s$^2$ and -$3$ m/s$^2$, respectively. Vehicles enter the main road and the on ramp with a traffic volume of $700$ and $650$ vehicle per hour per lane with random initial speed uniformly chosen from a set of $13.89$ to $16.67$  m/s. Videos of the experiment can be found at the supplemental site, \url{https://sites.google.com/view/ud-ids-lab/CAVPLT}.
%
%% USE the EPS figures for greater clarity
%% Use the PNG figures for submission in the arxiv
\begin{figure}[!]
    \centering
\includegraphics[width=0.95\linewidth]{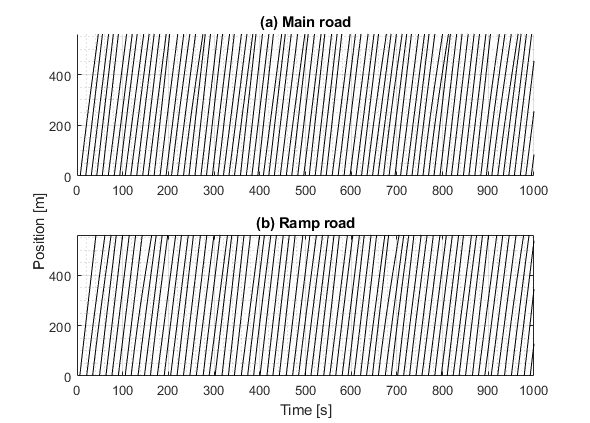}  \caption{The position trajectories of the optimally coordinated CAV platoons at the (a) main road and (b) ramp road are shown.}
    \label{fig:posVsTime}
\end{figure}

\begin{figure}[t]
    \centering
\includegraphics[width=0.95\linewidth]{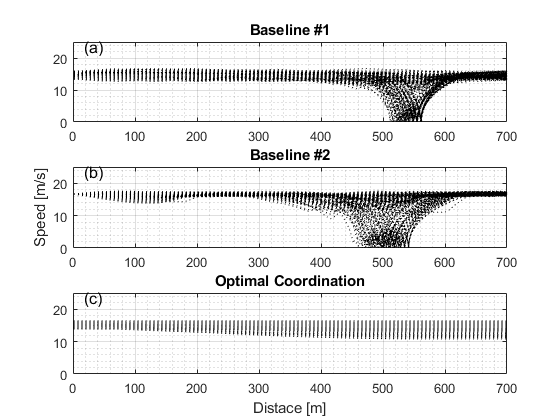}  \caption{Speed profiles of $400$ vehicles traveling through the on-ramp merging scenario for three cases: (a) baseline without platooning, (b) baseline with platooning and (c) optimal platoon coordination.}
    \label{fig:spdVsDist}
\end{figure}

\begin{figure}[t]
    \centering
\includegraphics[width=0.95\linewidth]{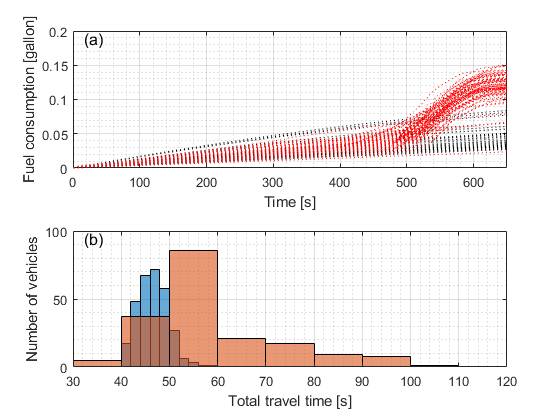}  \caption{Comparison of performance metrics: (a) cumulative fuel consumption of optimal coordination (black) vs. baseline with platooning (red), and (b) total travel time distribution of optimal coordination (blue) vs. baseline with coordination (maroon).}
    \label{fig:FC_time}
\end{figure}

To evaluate the performance of the proposed optimal control framework, we simulate the following control cases.

(a) Baseline 1: All vehicles in the network are human-driven vehicles. In this scenario, the Wiedemann car-following model built in VISSIM \cite{vissim} is applied.
The conflict point of the on-ramp merging scenario has a priority mechanism, where the vehicles on the ramp road are required to yield to the vehicle on the main road within a certain look-ahead distance. Vehicles enter the network individually without forming any platoons.

(b) Baseline 2: Similar to the above case, all the vehicles are human driven vehicles integrated with the Wiedemann car-following model, and follow the priority mechanism set at the conflict point of the on-ramp merging scenario. The difference is that, when vehicles enter the network, they have already formed platoons. Note, we consider this case to simulate the same initial condition of the optimal coordination case, which we discuss next. 

(c) Optimal Coordination: All the vehicles present in the network are connected and automated. They enter the network forming platoons of different sizes and optimize their trajectories based on the optimal coordination framework presented in Section \ref{sec:optimal_control}.

We use COM application programming interface to interact with the VISSIM simulator externally, and implement the proposed optimal coordination framework. At each simulation time step, we use the VISSIM-COM interface to collect the required vehicle attributes from the simulation environment and pass them to the external python script. The external python script implements the proposed single-level optimal control algorithm (Section \ref{subsec:algo}) to compute the optimal control input of each CAV within the control zone. Finally, the speed of each platooning CAV is updated in VISSIM traffic simulator in real-time using the COM interface.

\subsection{Results and Discussion}
In Fig. \ref{fig:spdVsDist}, the position trajectories of the optimal coordinated CAV platoons traveling through the main road and the ramp road of the considered on-ramp merging scenario are shown. The spatial gaps between the trajectory paths indicate that our framework satisfies the rear-end collision avoidance constraint without any violation.

To visualize the performance of the proposed coordination framework in comparison with the baseline cases, we focus on Figs. \ref{fig:spdVsDist} and \ref{fig:FC_time}. In Fig. \ref{fig:spdVsDist}, the speed trajectories of all the vehicles in the network are shown. In Figs. \ref{fig:spdVsDist} (a)-(b), both baseline cases show stop-and-go driving behavior close to the conflict point of the on-ramp merging scenario. In contrast, with the optimal coordination framework, we are able to completely eliminate stop-and-go driving behavior, as shown in Fig. \ref{fig:spdVsDist}(c). The elimination of the stop-and-go driving behavior has associated benefits, namely, the minimization of transient engine operation and travel time, as shown in Fig. \ref{fig:FC_time}. In Fig. \ref{fig:FC_time} (a), the baseline case with vehicle platoons (red) show sudden increase in fuel consumption near the conflict point due to the transient engine operation induced by the stop-and-go driving behavior. In contrast, the cumulative fuel consumption trajectories of the optimally coordinated CAVs (black) remain steady throughout the their path. Note that, we use the polynomial metamodel proposed in \cite{Kamal2013a} to compute the fuel consumption of each vehicle. In Fig. \ref{fig:FC_time} (b), we illustrate the distribution of total travel time of the vehicles for the baseline (maroon) and the optimal coordination (blue) framework. The high variance of the travel time for the baseline case compared to the optimal coordination approach indicates increased traffic throughput of the network.

Finally, we provide the summary of the performance metrics in Table \ref{table:summary}. Based on the simulation, the optimal coordination framework shows significant improvement over the baseline cases in terms of average travel time and fuel consumption.

\begin{table}[h]
\caption{Summary of performance metrics}
    \centering
\begin{tabular}{ |p{5cm}|p{3cm}|p{3cm}| }
\hline
Performance Metrics&Avg. travel time [s] & Avg. fuel consumption [gallon]    \\
\hline
Baseline 1 & 57.33 & 0.042\\
\hline
Baseline 2& 52.79 & 0.05\\
\hline
Optimal Coordination& 46.1 & 0.022\\
\hline
\hline
Improvement (baseline 1) [\%]& 19.6& 46.9 \\
\hline
Improvement (baseline 2) [\%] & 12.7 & 38.2\\
\hline
\end{tabular}
\label{table:summary}
\end{table}

\section{Concluding Remarks}\label{sec:conclusion}
In this paper, we leveraged the key concepts of CAV coordination and platooning, and established a rigorous optimal platoon coordination framework for CAVs that improves fuel efficiency and traffic throughput of the network. We presented a single-level optimal control framework that simultaneously optimizes both fuel economy and travel time of the platoons while satisfying the state, control, and safety constraints. We developed a robust coordination framework considering the effect of delayed inter-platoon communication and derived a closed-form analytical solution of the optimal control problem using standard Hamiltonian analysis that can be implemented in real time using leader-follower unidirectional communication topology. Finally, we validated the proposed control framework using a commercial simulation environment by evaluating its performance. Our proposed optimal coordination framework shows significant benefit in terms of fuel consumption and travel time compared to the baseline cases.

Ongoing work addresses the delay in intra-platoon communication and its implications on platoon stability. A potential direction for future research includes relaxing the assumption of $100$\% CAV penetration considering the inclusion of human-driven vehicles. Investigating the impact of connected vehicle technologies on the size of the components of the vehicle's powertrain \cite{Malikopoulos2013b} and the implications on traveler's decisions \cite{Shaltout2014} is another potential direction for future research.

%For acknowledgements section, please don't number the section, please begin it with \section*{Acknowledgements}
%\section*{Acknowledgments} We would like to thank you for \textbf{following the instructions above} very closely in advance. It will definitely save us lot of time and expedite the process of your paper's publication.

% You may incorporate your references as follows in your main tex file.
% Using BibTex is not recommended but can be handled.

\bibliographystyle{AIMS.bst}
\bibliography{references/IDS_Publications_11122021.bib, references/ref_misc, references/reference_all, references/ref_platoon}

\medskip
% The data information below will be filled by AIMS editorial staff
Received xxxx 20xx; revised xxxx 20xx.
\medskip

%%%%%%%%%%%%%%%%%%%%%%%%%%%%%%%%%%%%%%%%%%%%%%%%%%%%%%%%%%%
\begin{comment}
For a linear platoon systems with the unidirectional communication topology, the system is string stable if the transfer function of outputs between vehicle $j$ and its predecessor $j-1$, denoted as $G_{j,j-1}$, is such that
\begin{align}
    G_{j,j-1}(j\omega)_{H_{\infty}} \le 1, \forall j\in \mathcal{N}_i, i\in\mathcal{L_i}
\end{align}
\end{comment}
\end{document}